\documentclass[12pt,reqno]{amsart}
\usepackage{graphicx} 

\usepackage[margin=2.5cm]{geometry}

\usepackage{graphicx} 
\usepackage[utf8]{inputenc}
\usepackage[english]{babel}
\usepackage[T1]{fontenc}
\usepackage{amssymb}
\usepackage{mathtools}
\usepackage{amsfonts}
\usepackage{tikz}
\usepackage{tikz-cd}
\usepackage{amsmath}
\usepackage{amsthm}
\usepackage{mathrsfs}
\usepackage{changepage}
\graphicspath{{images/}}
\usepackage{fancyhdr}
\usepackage{csquotes}
\usepackage{import}
\usepackage{xifthen}
\usepackage{pdfpages}
\usepackage{transparent}
\usepackage{tikz-cd}

\usepackage[backend=biber]{biblatex}
\usepackage{hyperref}
\hypersetup{
	colorlinks=true,
	linkcolor=blue,
	citecolor=blue,
	urlcolor=blue,
	pdftitle={}
}

\DeclareMathOperator{\diam}{diam}

\DeclareMathOperator{\Con}{Con}

\newtheorem{theorem}{Theorem}  
\newtheorem{cor}[theorem]{Corollary}  

\theoremstyle{plain}
\newtheorem{thm}{Theorem}
\newtheorem{lemma}[thm]{Lemma}

\theoremstyle{definition}
\newtheorem{definition}[thm]{Definition}

\newtheorem{remark}[thm]{Remark}
\newtheorem{exmp}[thm]{Example}

\numberwithin{equation}{section}
\numberwithin{thm}{section}

\usepackage{setspace}

\theoremstyle{remark} 
\newtheorem*{ack}{Acknowledgements}

\newcommand{\Ric}{\mathrm{Ric}}

\title{The Sunada method on Metric Measure spaces}
\author{Lewis Tadman}
\date{\today}
\address{Department of Mathematical Sciences, Durham University, United Kingdom}
\email{lewis.tadman@durham.ac.uk}
\begin{document}
\begin{abstract}
    In this paper, we provide a general Sunada--Pesce--Sutton-type method that produces pairs of isospectral metric measure spaces. This construction relies on a representation theoretic assumption, and we give examples of spaces satisfying this condition, such as RCD spaces. As an application, we construct a pair of simply connected isospectral, non-isometric RCD non-Alexandrov spaces and a pair of Alexandrov non-orbifold spaces.
\end{abstract}
\maketitle


\section{Introduction} 
In 1985, Sunada introduced a powerful method for constructing isospectral Riemannian manifolds via isometric group actions \cite{Sunada_RiemCoveringsAndIsopecMfds}. This provided a systematic solution to the problem of finding isospectral yet not isometric spaces, which were known to exist by Milnor’s work \cite{Milnor_tori_64}. More recently, Engel and Weilandt constructed the first known non-trivial isospectral, non-isometric Alexandrov spaces that are not isometric to Riemannian orbifolds via the torus method \cite{Engel_Weilandt_isospec_alex_spaces}. Sunada's method has been generalised and extended, weakening the conditions that Sunada originally assumed, and allowing the quotients to be more singular spaces (see \cite{Pesce_sunada,Sutton_10, Sutton_03}). Here, we weaken the constraints of the construction further: we allow for the spaces under consideration to be compact, geodesic, infinitesimally Hilbertian metric measure spaces, where the Laplacian on the space has compact resolvent. For such metric measure spaces, we consider the Laplacian to be the self-adjoint non-negative generator of the quadratic Cheeger energy. 

Throughout, given a metric measure space $(X,d,m)$ and a group $H$ acting on $X$ by measure-preserving isometries, we denote by $\pi_{H}\colon X \rightarrow X/H$ the quotient map, and let $m_{H}$ be the push-forward measure of $m$. We denote the Laplacian by $\Delta_{X}$. If $\lambda$ is an eigenvalue of $\Delta_{X}$, we set $ E_\lambda := \left\{ f \in D(\Delta_{X}) \ | \ \Delta_{X}f = \lambda f \ \right\}$.  For the definition of \emph{$K$-equivalence} and \emph{$G$-isospectrality}, see Definition \ref{Definition :K-equivalence} and Definition \ref{Defintion: G-isospectral} respectively.

\begin{theorem}\label{Theorem: equivarient Sunada gen}
    Let $(X_1,d_1,m_1)$ and $(X_2,d_2,m_2)$ be compact, geodesic, infinitesimally Hilbertian metric measure spaces, with Laplacians $\Delta_{X_1},\Delta_{X_2}$ respectively which have compact resolvents. Let $G$ be a compact Lie group acting by measure-preserving isometries on each $X_i$, and fix a closed subgroup $K \leq G$. Assume $X_1$ and $X_2$ are $G$-isospectral and the following condition holds:
    \begin{enumerate}
        \item[(A1)] For each $i\in \{1,2\}$, each eigenvalue $\lambda$ of $\Delta_{X_i}$, and every irreducible $G$-representation $\pi$ occurring in the eigenspace $E^i_\lambda$, we have $\dim(V_\pi^K)>0.$
    \end{enumerate}
    If $H_1,H_2\leq G$ are closed $K$-equivalent subgroups acting on $X_1$ and $X_2$ respectively, then $X_1 / H_1$ and $X_2 / H_2$ are isospectral.
\end{theorem}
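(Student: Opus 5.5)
We reduce isospectrality of the quotients to a representation-theoretic multiplicity count, in the spirit of Sunada--Pesce--Sutton. Recall that $G$-isospectrality (Definition \ref{Defintion: G-isospectral}) means that for each $\lambda$ the eigenspaces $E^1_\lambda$ and $E^2_\lambda$ are isomorphic as $G$-representations. In particular, for every irreducible $\pi$ the multiplicities $m_\pi(E^1_\lambda)$ and $m_\pi(E^2_\lambda)$ agree. $K$-equivalence (Definition \ref{Definition :K-equivalence}) should give, for every irreducible $\pi$ with $V_\pi^K\neq 0$, the equality $\dim V_\pi^{H_1}=\dim V_\pi^{H_2}$. This is the Pesce-type condition restricted to representations containing $K$-fixed vectors.

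\textbf{Step 1: identify the spectrum of each quotient.} For a closed subgroup $H$ acting by measure-preserving isometries on $(X,d,m)$, we show that the map $f\mapsto f\circ\pi_H$ is an isometry from $L^2(X/H,m_H)$ onto $L^2(X,m)^H$. We then show that it maps the Sobolev space $W^{1,2}(X/H)$ onto $W^{1,2}(X)^H$ and preserves the Cheeger energy, so that $\mathrm{Ch}_{X/H}(f)=\mathrm{Ch}_X(f\circ\pi_H)$.

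The inequality $|D(f\circ\pi_H)|\le |Df|\circ\pi_H$ follows because $\pi_H$ is $1$-Lipschitz. For the reverse inequality, we first average a Lipschitz approximating sequence over $H$ against Haar measure. This produces $H$-invariant approximants, which descend to $X/H$ with the same local Lipschitz constants, since $d_{X/H}([x],[y])=\inf_h d(x,hy)$. Lower semicontinuity and the convexity of $\mathrm{Ch}$ then finish the argument. Once this holds, infinitesimal Hilbertianity passes to the quotient. The Laplacian $\Delta_{X/H}$ is then unitarily equivalent to $\Delta_X$ restricted to the $H$-invariant subspace, which it preserves because $H$ commutes with $\Delta_X$. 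Consequently $\Delta_{X/H}$ has compact resolvent, its eigenvalues are among those of $\Delta_X$, and the multiplicity of $\lambda$ is $\dim (E_\lambda)^H$.

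\textbf{Step 2: decompose.} Each $E^i_\lambda$ is finite-dimensional by compactness of the resolvent, so we may write $E^i_\lambda\cong\bigoplus_\pi m^i_\pi(\lambda)V_\pi$ as a finite sum. This gives
\[
\dim (E^i_\lambda)^{H_i}=\sum_\pi m^i_\pi(\lambda)\dim V_\pi^{H_i}.
\]
By (A1), every $\pi$ with $m^i_\pi(\lambda)>0$ satisfies $V_\pi^K\neq0$, so $K$-equivalence gives $\dim V_\pi^{H_1}=\dim V_\pi^{H_2}$ for these $\pi$. $G$-isospectrality gives $m^1_\pi(\lambda)=m^2_\pi(\lambda)$. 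Combining these, $\dim(E^1_\lambda)^{H_1}=\dim(E^2_\lambda)^{H_2}$ for every $\lambda$, and hence $X_1/H_1$ and $X_2/H_2$ are isospectral.

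\textbf{Main obstacle.} The hardest part is Step 1, namely the equality of Cheeger energies under the quotient map in the nonsmooth setting. This requires that Lipschitz approximation behaves well after Haar averaging, and that the minimal weak upper gradient of $f$ on $X/H$ pulls back exactly to that of $f\circ\pi_H$. I would first try the route through local Lipschitz constants described above. I would then use the density of Lipschitz functions in energy, which holds on general metric measure spaces by Ambrosio--Gigli--Savaré, to pass to general Sobolev functions.
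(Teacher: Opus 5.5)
Your proposal is correct and follows the same route as the paper. You identify the quotient Laplacian with the restriction of $\Delta_{X_i}$ to $L^2(X_i,m_i)^{H}$, so that the multiplicity of $\lambda$ on $X_i/H$ is $\dim(E^i_\lambda)^{H}$. Then you decompose $E^i_\lambda$ into irreducibles and use $G$-isospectrality, (A1) and $K$-equivalence to match $\sum_\pi m_\lambda(\pi)\dim V_\pi^{H_1}$ with $\sum_\pi m_\lambda(\pi)\dim V_\pi^{H_2}$.

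The differences are minor. You prove the Cheeger-energy identification on the quotient yourself by Haar-averaging Lipschitz approximants, where the paper cites \cite[Section 5]{GGKMS_qofwrcbb_18}. Your sketch works once you note $\mathrm{Lip}\,\bar g(x)\le\int_H \mathrm{Lip}\,g(hx)\,dh$ and apply Jensen. You also read equality of isotypic multiplicities directly off Definition~\ref{Defintion: G-isospectral}, rather than via the intertwining unitary of Remark~\ref{Remark: equivillent G-isospec def}.
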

If we take $K=\{e\}$, assumption (A1) is automatically fulfilled. Assumption (A1) holds in specific settings: for example, if $X_1$and $X_2$ are both Riemannian manifolds, and the actions of $G$ on these spaces have the same principal isotropy group $K \leq G$, Donnelly’s theorem  \cite[Theorem 2.3]{Donnelly} implies that any irreducible representation occurring in an eigenspace has non-zero $K$-fixed vectors; hence (A1) holds. This has been observed in \cite{Sutton_10}, with the added assumption that the submersions have minimal fibres \cite{Sutton_10}. In this setting, if we further add this assumption to Theorem~\ref{Theorem: equivarient Sunada gen}, then we recover \cite{Sutton_10} (cf.\ Remark~\ref{Remark: Comparison with Sutton}). If one takes $X_1=X_2$ and exchanges assumption (A1) for the assumption that for any irreducible representation $\pi$ of $G$, $\dim(\pi^{H_1})=\dim(\pi^{H_2})$, then one obtains a statement closer to Sunada's original theorem, but for compact Lie groups acting isometrically.
The proof of such statements splits into two parts: analytical and the representation-theoretic. The representation-theoretic part carries over to the proof of Theorem~\ref{Theorem: equivarient Sunada gen}, similar to Pesce and Sutton \cite{Pesce_sunada, Sutton_03, Sutton_10}. For the sake of completeness, we also include the representation-theoretic component of the proof. The analytical part of the proof, however, requires further work. 

Motivated by finding settings where condition (A1) is satisfied, the following corollary shows that condition (A1) is satisfied by $\mathrm{RCD}(\kappa,N)$ spaces when $K$ is the principal isotropy group. There are two main inputs for this. First, a principal orbit theorem for metric measure spaces that have ``good transport behaviour'' (\cite[Theorem 1.3]{GGKMS_qofwrcbb_18}), which generalises the principal orbit theorem for Alexandrov spaces \cite{GGG_isom_groups_of_Alexandrov_spaces_13} to the $\mathrm{RCD}$ category. The second input is the fact that given a compact $\mathrm{RCD}(\kappa,N)$ space, one can always find a Lipschitz representative of an eigenfunction \cite{Jiang_lipschitz_eigenfuctions_14}. Combining these with Theorem \ref{Theorem: equivarient Sunada gen}, one obtains the following result.

\begin{cor}\label{Corollary: Removing (A1) from Theorem A for RCD*}
    Let $(X_1,d_1,m_1)$ and $(X_2,d_2,m_2)$ be compact $\mathrm{RCD}(\kappa,N)$ spaces and let $G$ be a compact Lie group acting on each $X_i$ by measure-preserving isometries. Assume $X_1$ and $X_2$ are $G$-isospectral, and that the principal isotropy subgroups of the two actions are conjugate in $G$; fix a representative $K_0 \leq G$ of this common conjugacy class. If $H_1, H_2 \leq G$ are closed $K_0$-equivalent subgroups acting on $X_1$ and $X_2$ respectively, then $X_1/H_1$ and $X_2/H_2$ are isospectral.
\end{cor}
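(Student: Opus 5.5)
The plan is to check hypothesis (A1) of Theorem~\ref{Theorem: equivarient Sunada gen} with $K=K_0$; the corollary then follows directly from that theorem. The other hypotheses come for free. A compact $\mathrm{RCD}(\kappa,N)$ space is geodesic and infinitesimally Hilbertian. Its Laplacian has compact resolvent, because $W^{1,2}$ embeds compactly into $L^2$ on compact $\mathrm{RCD}$ spaces by the Rellich-type compactness available in this class. Since $K_0$-equivalence is invariant under conjugating $K_0$, the choice of representative does not matter. Replacing an action by a conjugated one, we may also assume that $K_0$ is exactly the principal isotropy group of both actions, that is, $K_0=G_{x_i}$ for some $x_i$ in the principal stratum of $X_i$.

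Fix $i$, an eigenvalue $\lambda$ of $\Delta_{X_i}$, and an irreducible $G$-representation $\pi$ occurring in $E^i_\lambda$. The action is by measure-preserving isometries, so it preserves the Cheeger energy and hence commutes with $\Delta_{X_i}$; thus $E^i_\lambda$ is a finite-dimensional $G$-module. Choose a $G$-invariant subspace $W\subseteq E^i_\lambda$ with $W\cong V_\pi$. By \cite{Jiang_lipschitz_eigenfuctions_14}, every $f\in W$ has a Lipschitz, in particular continuous, representative. This is needed so that point evaluation makes sense. The continuous representative is unique because $m_i$ has full support, and therefore $G$ acts on $W$ by $(g\cdot f)(x)=f(g^{-1}x)$ pointwise.

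The key step uses the principal orbit theorem \cite[Theorem 1.3]{GGKMS_qofwrcbb_18}. The set of principal points $X_i^{\mathrm{pr}}$, whose isotropy is conjugate to $K_0$, is open and dense; in fact it has full $m_i$-measure. Take a nonzero $f\in W$. By continuity and density there is a principal point $y$ with $f(y)\neq 0$, and $G_y=gK_0g^{-1}$ for some $g\in G$. Consider $\mathrm{ev}_y\colon W\to\mathbb{R}$ (or $\mathbb{C}$), $h\mapsto h(y)$. This is a nonzero linear functional invariant under $G_y$, since $(k\cdot h)(y)=h(k^{-1}y)=h(y)$ for $k\in G_y$. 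Hence $(W^*)^{G_y}\neq 0$, and conjugating by $g$ gives $(W^*)^{K_0}\neq 0$.

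It remains to pass from the dual to $V_\pi$ itself. For compact $G$ we have $W^*\cong\overline{W}$. If $\dim\overline{W}^{K_0}>0$, then $\dim W^{K_0}>0$, because complex conjugation maps $K_0$-fixed vectors of $\overline{W}$ bijectively onto $K_0$-fixed vectors of $W$. In the real case $W^*\cong W$ via an invariant inner product. This proves (A1) for both $i=1,2$, and Theorem~\ref{Theorem: equivarient Sunada gen} then gives the isospectrality of $X_1/H_1$ and $X_2/H_2$.

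The main obstacle I expect is invoking the principal orbit theorem correctly. Its hypotheses (good transport behaviour, or essential nonbranching with $\mathrm{MCP}$) must be verified for $\mathrm{RCD}(\kappa,N)$ spaces, and one must make sure that its conclusion gives a dense set of points whose isotropy is exactly conjugate to $K_0$, not merely contained in it up to conjugacy. The continuity of eigenfunctions from \cite{Jiang_lipschitz_eigenfuctions_14} is the second essential input; without it, pointwise evaluation at a single orbit type would not be meaningful.
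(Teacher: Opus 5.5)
Your proposal is correct and follows essentially the same route as the paper: Lipschitz representatives of eigenfunctions, the principal orbit theorem to find a principal point $y$ with $f(y)\neq 0$, and $G_y$-invariance of evaluation at $y$ to get $V_\pi^{K_0}\neq 0$, so that (A1) holds and Theorem~\ref{Theorem: equivarient Sunada gen} applies. The only difference is cosmetic: the paper averages $u$ over $G_x$ to get the nonzero fixed vector $u_{G_x}\in W$ (with $u_{G_x}(x)=u(x)\neq 0$) directly, avoiding your detour through $W^*\cong\overline{W}$; also, you only need density of the principal set, which follows from its full measure together with $\mathrm{supp}(m_i)=X_i$, so the claimed openness (which the cited theorem does not provide, and which you never use) can be dropped.
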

One of the main applications of the Sunada construction is to build isospectral spaces that are not isometric. This has been studied at length, producing examples of isospectral Riemannian manifolds and orbifolds \cite{An_Yu_Yu_13,Sunada_RiemCoveringsAndIsopecMfds,  Sutton_10}. With the above corollary in hand, one can try to produce new examples of non-isometric, isospectral $\mathrm{RCD}(\kappa,N)$ and Alexandrov spaces. The advantage of using the construction of Corollary \ref{Corollary: Removing (A1) from Theorem A for RCD*} is that one does not have to start with a smooth Riemannian manifold, and can begin with a singular space. Combined with the fact that the quotient is automatically in the same category \cite{BGP_SCBB_1_92, GGKMS_qofwrcbb_18}, one obtains the following result. 

\begin{cor}\label{Corollary :Cone statement}
Let $N>1$ and let $(X,d,m)$ be a compact $\mathrm{RCD}(N-1,N)$ space, with $\diam \leq \pi$. 
Let $G$ be a compact Lie group acting on $X$ by $m$-preserving isometries and let $K\le G$ be the principal isotropy subgroup, or $K=\{e\}$. If $H_1,H_2\le G$ are closed $K$-equivalent subgroups, then the $(1,N)$-cones with their induced cone metrics $(\Con_1^N(X/H_1),d_{\mathrm{Con}}^1)$ and $(\Con_1^N(X/H_2),d_{\mathrm{Con}}^2)$ are isospectral
compact $\mathrm{RCD}(N,N+1)$ spaces. 
\end{cor}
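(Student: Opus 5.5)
Apply Corollary~\ref{Corollary: Removing (A1) from Theorem A for RCD*} (or Theorem~\ref{Theorem: equivarient Sunada gen} when $K=\{e\}$) with $X_1=X_2=X$ to conclude that $X/H_1$ and $X/H_2$ are isospectral, then transfer this isospectrality to the $(1,N)$-cones by separation of variables.

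\textbf{Step 1: the quotients are isospectral.} Take $X_1=X_2=X$ with the same $G$-action. Then $X$ is trivially $G$-isospectral to itself, since the identity intertwines the $G$-actions on every eigenspace. The principal isotropy groups coincide, so they are conjugate. If $K$ is the principal isotropy group, Corollary~\ref{Corollary: Removing (A1) from Theorem A for RCD*} gives isospectrality of $X/H_1$ and $X/H_2$. If $K=\{e\}$, (A1) holds automatically, and Theorem~\ref{Theorem: equivarient Sunada gen} gives the same conclusion; compactness of the resolvent holds since compact RCD spaces have discrete spectrum.

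\textbf{Step 2: the quotients lie in $\mathrm{RCD}(N-1,N)$.} By \cite{GGKMS_qofwrcbb_18}, each $(X/H_i,d_{H_i},m_{H_i})$ is again $\mathrm{RCD}(N-1,N)$. Quotient distances do not increase, so $\diam(X/H_i)\le\pi$. Ketterer's cone theorem then says that $\Con_1^N(X/H_i)$ with its cone metric and the measure $r^{N}\,dr\otimes m_{H_i}$ is a compact $\mathrm{RCD}(N,N+1)$ space (the $(1,N)$-spherical suspension). If some quotient is a point, or $N<2$, the relevant low-dimensional cases of Ketterer's theorem need a separate check.

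\textbf{Step 3: cone spectrum from cross-section spectrum.} On the $(K,N)$-cone, the Cheeger energy splits in the warped-product form
$$\int |\partial_r u|^2 + \frac{1}{\sin^2 r}|\nabla_{Y}u|^2 \, d m_{\Con}.$$
Here $Y=X/H_i$ and $dm_{\Con}=\sin^N r\,dr\,dm_Y$. Expanding $u(r,y)=\sum_j \phi_j(r)\psi_j(y)$ in an eigenbasis $\{\psi_j\}$ of $\Delta_Y$ with eigenvalues $\mu_j$ decomposes $L^2$ of the cone into an orthogonal sum. The $j$-th summand is $L^2([0,\pi],\sin^N r\,dr)$ carrying the one-dimensional Sturm--Liouville operator
$$-\phi''-N\cot r\,\phi'+\frac{\mu_j}{\sin^2 r}\phi.$$
The cone spectrum is therefore the union, with multiplicities, of the spectra of these operators. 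Each such spectrum depends only on $N$ and $\mu_j$. Equal spectra of $Y_1$ and $Y_2$ with multiplicities thus give equal spectra of the cones.

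\textbf{Main obstacle.} The hard part is the rigorous separation of variables on a nonsmooth cone. One must show that the Cheeger energy of the cone equals this warped-product form, and that the form domain splits so the decomposition is a genuine unitary equivalence of self-adjoint operators. I would cite the Bakry--Émery/Ketterer identification of Sobolev spaces on $(K,N)$-cones over RCD spaces. Failing that, I would prove the energy identity directly on the dense class of functions $\phi(r)\psi(y)$ with $\phi$ Lipschitz and $\psi$ an eigenfunction, and then pass to closure via infinitesimal Hilbertianity.
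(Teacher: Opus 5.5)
Your proposal is correct in outline, but it takes a genuinely different route from the paper.

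\textbf{The paper's route.} The paper never shows that $X/H_1$ and $X/H_2$ are isospectral, and it does no analysis on the cone. It lets $G$ act on $\Con_1^N(X)$ through the first factor, $g\cdot[x,t]=[g\cdot x,t]$, and identifies $\Con_1^N(X)/H_i$ with $\Con_1^N(X/H_i)$ as metric measure spaces. This identification holds for two reasons:
\begin{enumerate}
\item the cone distance is a decreasing function of $d(x,y)$, so taking the infimum over $H_i$-orbits commutes with the cone construction;
\item pushing forward $m\otimes\sin^N t\,dt$ gives $m_{H_i}\otimes\sin^N t\,dt$.
\end{enumerate}
It then applies Theorem~\ref{Theorem: equivarient Sunada gen} (for $K=\{e\}$) or Corollary~\ref{Corollary: Removing (A1) from Theorem A for RCD*} directly to the compact $\mathrm{RCD}(N,N+1)$ space $\Con_1^N(X)$. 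The quotient theorem of \cite{GGKMS_qofwrcbb_18} supplies the curvature-dimension bound of the quotient cones. Implicit here is that the principal isotropy group of $G$ on the cone equals that on $X$: the points $[x,t]$ with $0<t<\pi$ have full measure and isotropy $G_x$.

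\textbf{Your route.} You instead need a separation-of-variables theorem on a nonsmooth spherical suspension. This requires the warped-product form of the cone's Cheeger energy, a.e.\ slice regularity, and the Parseval diagonalization into one-dimensional Sturm--Liouville forms depending only on $N$ and $\mu_j$. That identification is available in the literature. Ketterer obtains the skew-product description of the cone's Cheeger energy on the way to his cone theorem \cite{Ketterer_Conesovermetricmeasurespaces_15}, and Gigli--Han treat Sobolev spaces on warped products in general. So the step you flag as the main obstacle can be closed by citation, but it is considerably heavier than the paper's elementary identification.

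\textbf{What each buys.} The paper's argument is short and reuses only the already-proved Sunada theorem. Yours yields a group-free statement: isospectral $\mathrm{RCD}(N-1,N)$ spaces of diameter at most $\pi$ have isospectral $(1,N)$-cones. That statement applies to cross-sections not produced by a Sunada construction, and it records the isospectrality of $X/H_1$ and $X/H_2$ along the way.

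One small slip: in Step~2 you write the measure as $r^N\,dr\otimes m_{H_i}$, which is the Euclidean-cone measure. For $\Con_1^N$ it is $\sin^N r\,dr\otimes m_{H_i}$, as you correctly use in Step~3.
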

To give new examples using Corollary~\ref{Corollary :Cone statement}, one must ensure the isospectral pairs of $\mathrm{RCD}$ spaces produced do not belong to a more regular class, such as Riemannian manifolds, orbifolds, or Alexandrov spaces. This is achieved by choosing the bases of the cones appropriately. In this way, one may construct isospectral pairs that are RCD non-Alexandrov, and Alexandrov non-orbifold (see Examples~\ref{Example: isospectral RCD spaces} and \ref{Example: isospectral Alexandrov spaces}). However, this is not the only situation that may occur, as one may get a pair of isospectral spaces that are in two different categories: Adelstein--Sandoval \cite{Adelstein_Sandoval_17} provided an example of an orbifold and a non-orbifold which are isospectral. 
\\

Our article is organised as follows. In Section 2, we discuss the preliminaries for the paper, including the basic concepts needed from metric, $\mathrm{RCD}$, and Alexandrov geometry. Furthermore, we will define the group theoretic terms used, and state other Sunada type constructions, to put the main statements of the paper into context. Section 3 consists of the proof of Theorem~\ref{Theorem: equivarient Sunada gen}, and Section 4 comprises of the proofs of the corollaries above, and examples of non-isometric isospectral singular spaces.

\begin{ack}
    The author would like to thank his supervisor Fernando Galaz-García for his advice, suggestions and support. He is also grateful to Martin Kerin for the conversations regarding quotients with positive curvature. Finally, the author would like to thank Mohammad Alattar and Dan Disney for many helpful discussions.
\end{ack}

\section{Preliminaries}

In this section, we collect background definitions and results that will be used in the proofs of our main result.

\subsection{Metric and RCD Geometry} In this subsection, we shall review background material on $\mathrm{RCD}$ spaces we will use in the remainder of the paper. 
For a more detailed exposition on this subject, see \cite{Gigli_NSDGRicci_18,Gigli_Pasqualetto_Lecturesonnonsmoothdiffgeom_20}. 

Throughout, a metric measure space $(X,d,m)$ is defined to be a complete, separable, geodesic metric space with a non-negative Borel-measure $m$ which is finite on compact balls and has full support. We denote the space of all Lipschitz functions on $X$ by $\mathrm{LIP}(X)$.

Let $f\colon X\rightarrow\mathbb{R}$ be a Lipschitz function. We define the \textit{local Lipschitz constant} of $f$ at $x\in X$, denoted $\mathrm{Lip}f(x)$, by
$$
\mathrm{Lip}f(x):= \limsup_{y\rightarrow x}\frac{|f(x)-f(y)|}{d(x,y)}.
$$

\begin{definition}\label{Definition: Cheeger Energy}
Let $(X,d, m)$ be a metric measure space. Given $f\in L^{2}( m)$, we define its \textit{Cheeger energy} by
$$
\mathrm{Ch}(f)
:=\inf\left\{
\liminf_{n\to\infty}\frac12\int_X \bigl(\mathrm{Lip}\,f_n\bigr)^2\,d m
\;\middle|\;
f_n\in \mathrm{LIP}(X),\ f_n\to f \text{ in } L^{2}( m)
\right\}.
$$
\end{definition}

\begin{definition}\label{Definition: Sob Space}
Let $(X,d, m)$ be a metric measure space. The Sobolev space $W^{1,2}(X)$ is 
\[
W^{1,2}(X):=\{f\in L^{2}( m)\mid \mathrm{Ch}(f)<\infty\}
\]
equipped with the norm
\[
\|f\|^{2}_{W^{1,2}}:=\|f\|^{2}_{L^{2}( m)}+2\,\mathrm{Ch}(f).
\]
If this norm comes from an inner product, we say that $(X,d, m)$ is
\textit{infinitesimally Hilbertian}.
\end{definition}
We denote by $|\nabla f|$ the weak upper gradient of $f\in W^{1,2}(X)$, which satisfies $\int_X|\nabla f|^2 \ dm =2\mathrm{Ch}(f)$. Before defining the Laplacian, and $\mathrm{RCD}$ spaces, we define the following pointwise inner product.

\begin{definition}[Pointwise inner product]
Let $(X,d,m)$ be an infinitesimally Hilbertian metric measure space. Given $f,g\in W^{1,2}(X)$ we define the pointwise inner product by
$$
\langle \nabla f,\nabla g\rangle
:=\frac14\left(|\nabla(f+g)|^{2}-|\nabla(f-g)|^{2}\right).
$$
\end{definition}
With the above definitions, we can now define the Laplacian. 

\begin{definition}\label{Definition: Laplacian}
Let $(X,d, m)$ be an infinitesimally Hilbertian metric measure space. A function $f\in W^{1,2}(X)$ is said to lie
in the domain of the Laplacian, denoted $D(\Delta)$, if there exists $g\in L^{2}(m)$
such that
\[
\int_X h\, g\, d m
=
\int_X \langle \nabla f,\nabla h\rangle\, d m
\]
for all $h\in W^{1,2}(X)$.
We denote this function $g$ by $\Delta f$, and refer to it as the \emph{Laplacian} of $f$. Note that the function $g$ is unique $m$-a.e. 
\end{definition}
We now give the definition of an $\mathrm{RCD}(\kappa,N)$ space. The curvature-dimension condition was introduced in \cite{Lott_Villani_09,sturm_I_2006,sturm_II_2006}, and this hypothesis was refined further in \cite{Ambrosio_gigli_savare_RCD_orig_14}. The following definition is equivalent to that of \cite{Ambrosio_gigli_savare_RCD_orig_14}; see \cite{Erbar_Kuwada_Sturm_15} for the case $N<\infty$ and for $N=\infty$ \cite{Ambrosio_Gigli_Savare_15}. Originally, there were two definitions, $\mathrm{RCD}^*(\kappa,N)$ and $\mathrm{RCD}(\kappa,N)$; however, these were shown to be equivalent \cite{Cavalletti+Milman_21,Li_24}.

\begin{definition}
    For $\kappa\in\mathbb{R}$ and $N\in[1,\infty)$, a metric measure space $(X,d, m)$ is said to be
an $\mathrm{\mathrm{RCD}}(\kappa,N)$ \emph{space} if the following are satisfied:
\begin{enumerate}
\item $(X,d, m)$ is infinitesimally Hilbertian.
\item \emph{Volume growth property}: there exist $x\in X$ and $C>1$ such that
$$
 m\left(B_r(x)\right)\le C e^{C r^{2}},
 \ \ \  \text{for all } r>0.
$$
\item \emph{Sobolev-to-Lipschitz property}: for every
$f\in W^{1,2}(X)$ with $|\nabla f|\le 1$ $ m$-a.e.\ on $X$, there exists a
$1$-Lipschitz representative of $f$.
\item \emph{Bakry--\'Emery inequality}: for all
$f\in D(\Delta)$ with $\Delta f\in W^{1,2}(X)$ and for all $g\in D(\Delta)$ that are
non-negative and bounded with $\Delta g\in L^\infty( m)$, we have
$$
-\frac12\int_X (\Delta g)\,|\nabla f|^2\,d m
+\int_X g\,\langle \nabla(\Delta f),\nabla f\rangle\,d m
\ge
\kappa\int_X g\,|\nabla f|^2\,d m
+\frac1N\int_X g\,(\Delta f)^2\,d m.
$$
\end{enumerate}
\end{definition}
\begin{remark}
    The class defined above contains many well-studied spaces. The results of Petrunin \cite{Petrunin_11} and Zhang--Zhu \cite{Zhang_Zhu_10} show that finite-dimensional Alexandrov spaces equipped with the Hausdorff measure are $\mathrm{RCD}$. Lytchak and Stadler have shown that the class of non-collapsed RCD and Alexandrov spaces coincide in dimension $2$ \cite{Lytchak_Stadler_23}. The class of $\mathrm{RCD}$ spaces is strictly larger than that of Alexandrov spaces. For example, Ricci limit spaces belong to this class, yet not all such spaces are Alexandrov. 
\end{remark}

We will also need the notion of quotients of metric measure spaces via a compact Lie group $G$ acting by measure-preserving isometries. For more on metric measure spaces under compact Lie group actions, see \cite{GGKMS_qofwrcbb_18}. Let $(X,d,m)$ be a geodesic metric measure space, and let $\pi_G\colon X \rightarrow X/G$ be the quotient map, where $G$ is a compact Lie group acting on $X$ by measure-preserving isometries. Given $x\in X$, we let $[x]:=\pi_G(x)$. We equip the space $X/G$ with the push-forward measure $m_G:= m\circ(\pi_G)^{-1}$ and the quotient metric
$$
d_G([x],[y]):=\inf_{\substack{u\in \pi_G^{-1}([x])\\ v\in \pi_G^{-1}([y])}} d(u,v).
$$

It is known that the group of measure-preserving isometries of an $\mathrm{RCD}$ space is a Lie group \cite{Guijarro_santos_RCD_iso_group_19,Sosa_LiegroupofRCD_18} which is compact if the $\mathrm{RCD}$ space is compact. We will rely on the following fact throughout the paper: if the base space is an $\mathrm{RCD}(\kappa,N)$ space, then the quotient space remains in that class.

\begin{thm}[\protect{\cite[Theorem 1.1]{GGKMS_qofwrcbb_18}}] Let $(X,d,m)$ be an $\mathrm{RCD}(\kappa,N)$ space. If $G$ is a compact Lie group acting by measure-preserving isometries on $X$, then the quotient metric-measure space $(X/G,d_G,m_G)$ is an $\mathrm{RCD}(\kappa,N)$ space. 
\end{thm}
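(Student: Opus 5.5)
The plan is to verify the three parts of the definition of $\mathrm{RCD}(\kappa,N)$ on the quotient. For each part, I transfer the corresponding property from $X$ through the quotient map $\pi_G$. The basic device is the identification of $L^2(m_G)$ with the $G$-invariant subspace $L^2(m)^G$, via $f\mapsto f\circ\pi_G$. This is an isometry because $m_G$ is the push-forward of $m$. For Lipschitz $f$ on $X/G$, the function $f\circ\pi_G$ is Lipschitz with the same constant. Conversely, because $G$ is compact and acts by isometries, averaging over the Haar measure sends Lipschitz functions to $G$-invariant Lipschitz functions without increasing the Lipschitz constant, and the same holds for the local Lipschitz constant.

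\textbf{Step 1: Cheeger energies.} I would show that $\mathrm{Ch}_{X/G}(f)=\mathrm{Ch}_X(f\circ\pi_G)$, and that $|\nabla f|\circ\pi_G=|\nabla(f\circ\pi_G)|$ holds $m$-a.e.
\begin{itemize}
\item For the inequality ``$\ge$'', compose approximating sequences with $\pi_G$. This uses $\mathrm{Lip}(f\circ\pi_G)\le (\mathrm{Lip} f)\circ\pi_G$, which holds since $\pi_G$ is $1$-Lipschitz.
\item For the reverse inequality, approximate $f\circ\pi_G$ by Lipschitz $f_n$ and replace them by their $G$-averages. These still converge in $L^2$, and Jensen together with convexity gives no increase of $\int(\mathrm{Lip})^2$.
\item The averaged functions descend to $X/G$. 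The point to check is that $\mathrm{Lip}\bar f_n([x])\le \mathrm{Lip}(f_n)(x)$. This uses the fact that near $[x]$ every point $[y]$ has a representative $y$ with $d(x,y)=d_G([x],[y])$, which holds by compactness of orbits.
\end{itemize}
Once the equality of energies is in place, infinitesimal Hilbertianity of $X/G$ is immediate: the parallelogram law for $\mathrm{Ch}_X$ restricts to invariant functions. The volume growth condition also follows at once, since $\pi_G^{-1}(B_r([x]))\subseteq B_{r}(Gx)\subseteq B_{r+\diam(Gx)}(x)$.

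\textbf{Step 2: Laplacians and Sobolev-to-Lipschitz.} The energy identity shows that $W^{1,2}(X/G)$ is isometric to $W^{1,2}(X)^G$. I would then show that the Laplacian commutes with this identification, so that $\Delta_X(f\circ\pi_G)=(\Delta_{X/G}f)\circ\pi_G$. The argument has two parts.
\begin{itemize}
\item The heat flow and $\Delta_X$ commute with the isometric measure-preserving action of $G$, because $\mathrm{Ch}_X$ is $G$-invariant.
\item It therefore suffices to test the defining identity in the definition of $\Delta$ against $G$-invariant $h$. Averaging a general $h$ over $G$ does not change either side when $f$ is invariant.
\end{itemize}
For Sobolev-to-Lipschitz, take $f$ with $|\nabla f|\le1$. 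Then $f\circ\pi_G$ has a $1$-Lipschitz representative $\tilde f$. Its $G$-average is invariant, $1$-Lipschitz and still a representative. It descends to a function on $X/G$ that is $1$-Lipschitz with respect to $d_G$, because the infimum defining $d_G$ is realised.

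\textbf{Step 3: Bakry--\'Emery.} Given $f,g$ on $X/G$ as in the definition, the pullbacks $f\circ\pi_G$ and $g\circ\pi_G$ satisfy the same hypotheses on $X$, by Step 2. Every term in the Bakry--\'Emery inequality is an integral of a pulled-back quantity. This uses that $\langle\nabla\cdot,\nabla\cdot\rangle$ pulls back by polarisation, since the minimal weak upper gradients pull back. So the inequality on $X$ integrates against $m$ to exactly the inequality on $X/G$ with respect to $m_G$.

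\textbf{Main obstacle.} I expect the hardest point to be the pointwise identification of minimal weak upper gradients in Step 1, rather than only the identification of the total energies. Polarisation and the Bakry--\'Emery step need the pointwise statement. My first attempt would be the following. The inequality $|\nabla(f\circ\pi_G)|\le|\nabla f|\circ\pi_G$ comes from $\pi_G$ being $1$-Lipschitz and measure-preserving, since it maps test plans to test plans. Equality of the integrals then forces a.e. equality. If that argument breaks down, I would fall back to the transport-based proof: show that $\pi_G$ induces an isometry $W_2(\mathcal P(X)^G)\cong W_2(\mathcal P(X/G))$ and that the entropy is preserved, verify $\mathrm{CD}(\kappa,N)$ by lifting optimal plans through the invariant disintegration, and combine this with infinitesimal Hilbertianity from Step 1.
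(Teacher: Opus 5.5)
Your proof is correct, but it takes a different route from the one behind this statement. The paper gives no argument of its own and cites \cite[Theorem 1.1]{GGKMS_qofwrcbb_18}. There the result is proved on the optimal-transport side. One shows that $(\pi_G)_\#$ is an isometry from the $G$-invariant measures in $(\mathcal{P}_2(X),W_2)$ onto $(\mathcal{P}_2(X/G),W_2)$. Optimal plans and geodesics between invariant measures are then lifted to compare the entropy functionals in the curvature-dimension condition, and infinitesimal Hilbertianity is checked separately.

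You instead stay entirely within the Eulerian definition the paper adopts. Averaging gives $\mathrm{Ch}_{X/G}(f)=\mathrm{Ch}_X(f\circ\pi_G)$. Pushing test plans forward gives $|\nabla(f\circ\pi_G)|\le|\nabla f|\circ\pi_G$, and equality of the integrals upgrades this to an $m$-a.e.\ identity. That identity is exactly what polarisation and the Bakry--\'Emery transfer need. So your ``first attempt'' already works, and the transport fallback, which is essentially the cited proof, is unnecessary. Note also that your Sobolev-to-Lipschitz step already uses this pointwise inequality, so that argument logically belongs in Step 1.

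What your route buys is an elementary, self-contained argument, given the Eulerian characterisation of $\mathrm{RCD}(\kappa,N)$ \cite{Erbar_Kuwada_Sturm_15}. It also yields at the same time the identifications $L^2(X/G,m_G)\cong L^2(X,m)^G$ and $\Delta_{X/G}\cong\Delta_X|_{L^2(X,m)^G}$, which the paper later imports from \cite{GGKMS_qofwrcbb_18} in the proof of Theorem~\ref{Theorem: equivarient Sunada gen}. What the transport route buys is generality. It needs neither Hilbertianity nor a group to average over, so it also gives stability of the non-Hilbertian curvature-dimension conditions and extends to submetries and metric-measure foliations.

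When you write it out, fix two points. First, your remark that averaging does not increase ``the local Lipschitz constant'' is only true in integrated form. Pointwise you have $\mathrm{Lip}(\tilde f_n)(x)\le\int_G\mathrm{Lip}(f_n)(gx)\,dg$, justified by reverse Fatou since the integrands are dominated by the global Lipschitz constant, and Jensen then finishes. Second, record the routine facts that $(X/G,d_G,m_G)$ is complete, separable, geodesic and of full support.
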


Let $(X,d,m)$ be a metric measure space, and let $G$ act on $X$ via isometries. The \textit{isotropy group} of $x\in X$ is the subgroup $G_x:=\{g\in G \ | gx=x\} \leq G$. The principal orbit theorem for RCD spaces follows from \cite{GGKMS_qofwrcbb_18}, where this result was proven in the more general case of metric measure spaces that satisfy the \emph{good transport behaviour} (GTB) condition; see \cite{GGKMS_qofwrcbb_18,Kell2017} for more details on this assumption. $\mathrm{RCD}$ spaces are known to satisfy the (GTB) assumption \cite{Cavalletti_Huesmann_15, Cavalletti_Mondino_17, GGKMS_qofwrcbb_18, Gigli_Rajala_Sturm_16, Kell2017}.  

\begin{thm}[\protect{\cite[Theorem 1.3]{GGKMS_qofwrcbb_18}}] Assume that $(X,d,m)$ has $(\mathrm{GTB})$. Then there exists (up to conjugation)
a unique subgroup $G_{\min}\le G$ such that $G_x$ is conjugate to $G_{\min}$ for $m$-almost every $x\in X$.
\end{thm}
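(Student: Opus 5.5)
The plan is to work with isotropy \emph{types}, meaning conjugacy classes $(H)$ of closed subgroups $H\le G$, and to show that exactly one type carries full measure. I would first set up the bookkeeping. A compact Lie group has only countably many conjugacy classes of closed subgroups, so $X$ is the countable disjoint union of the strata $X_{(H)}:=\{x\in X \mid G_x\in (H)\}$. Each stratum is $G$-invariant, and Borel because $x\mapsto G_x$ is upper semicontinuous. Hence at least one stratum has positive $m$-measure. I would also order types by $(H)\preceq(H')$ if $H$ is conjugate to a subgroup of $H'$. This order is well founded: a proper inclusion of closed subgroups lowers either the dimension or the number of components. So among the types with $m(X_{(H)})>0$ there is a minimal one, $(G_{\min})$. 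Uniqueness up to conjugation is automatic once we know $m(X\setminus X_{(G_{\min})})=0$, so everything reduces to that claim.

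The geometric input is the standard fact about horizontal geodesics. Let $\gamma$ be a minimizing geodesic from $x$ to $y$ realizing $d(Gx,Gy)$. Then all interior points $\gamma(t)$, $0<t<1$, have the same isotropy group, and it is contained in $G_x\cap G_y$. I would prove this by the usual argument. If $g$ fixes $\gamma(t)$, then concatenating $\gamma|_{[0,t]}$ with $g\gamma|_{[t,1]}$ gives a curve of length $d(Gx,Gy)$ between the two orbits, so it is again minimizing. Non-branching then forces $g\gamma=\gamma$, and so $g$ fixes the endpoints and every interior point. Pointwise non-branching is not available here. We only have the a.e.\ version contained in (GTB): optimal plans between absolutely continuous measures are induced by maps, and are concentrated on a non-branching set of geodesics. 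So the argument has to be run for $\mu$-almost every transport geodesic rather than for all geodesics.

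The main step is as follows. Suppose $C:=X\setminus X_{(G_{\min})}$ has positive measure. Let $\mu_0$ and $\mu_1$ be the normalized restrictions of $m$ to $C$ and to $X_{(G_{\min})}$. Both sets are $G$-invariant, so I would push both measures to $X/G$ and take the optimal plan there, which is unique and given by a map by (GTB). I would then lift it to a plan on $X$ along horizontal geodesics, using a measurable selection. By (GTB) the interpolants $\mu_t$, $0<t<1$, are absolutely continuous with respect to $m$. By the previous paragraph, for $\mu$-a.e.\ geodesic the interior isotropy is contained in both $G_{\gamma(0)}$ and $G_{\gamma(1)}$. Since $G_{\gamma(1)}$ is of type $(G_{\min})$, every interior point has type $\preceq(G_{\min})$. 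Minimality of $(G_{\min})$ among positive-measure types, together with $\mu_t\ll m$, then forces this type to be exactly $(G_{\min})$. The same containment gives $G_{\min}$ conjugate into $G_{\gamma(0)}$, hence a proper subgroup for $\gamma(0)\in C$.

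I expect the hardest step to be turning this into a contradiction. Knowing that interior points are principal while the starting point is not does not by itself contradict anything about measure. My first attempt would use the quantitative content of (GTB), namely the measure contraction type estimate $\mu_t\le (1-t)^{-N}\,(\ldots)\,m$ as $t\to0$. The aim is to show that $\mu_t\rightharpoonup\mu_0$ while the $\mu_t$ stay concentrated on $X_{(G_{\min})}$ with densities controlled uniformly near $t=0$. Combined with a density-point argument at a point of $C$, this should force $\mu_0(C)=0$. If that route fails, I would instead iterate the containment argument: apply it to two positive-measure strata to produce a positive-measure type below both of them. This would show that the positive-measure types have a unique minimum. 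I would then try to obtain full measure by symmetrizing the transport, running it from $X_{(G_{\min})}$ to $C$ as well.
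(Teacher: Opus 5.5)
The paper gives no proof of this statement; it is quoted from \cite{GGKMS_qofwrcbb_18}. Judged on its own, your proposal has a genuine gap. The decisive step is never carried out, and the information you do extract cannot produce a contradiction.

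The horizontal-geodesic inclusion $G_{\gamma(t)}\subseteq G_{\gamma(0)}\cap G_{\gamma(1)}$ only shows that $G_{\min}$ is conjugate into $G_{\gamma(0)}$ for $\gamma(0)\in C$. That is exactly what one expects to be true: in a smooth $G$-manifold the principal isotropy group is conjugate into every isotropy group. So neither iterating this inclusion nor running the transport in the reverse direction can contradict $m(C)>0$.

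Your measure-contraction route could be closed if you had uniform $L^\infty$ bounds on the densities of $\mu_t$ as $t\downarrow 0$. Then $\mu_t\rightharpoonup\mu_0$ could be tested against $\mathbf{1}_C$, giving $0=\mu_t(C)\to\mu_0(C)=1$. But those bounds are not part of (GTB), and neither is absolute continuity of the interpolants or the non-branching structure you invoke. (GTB) only says that every optimal coupling from an $m$-absolutely continuous measure is induced by a map. There are further problems:
\begin{itemize}
\item The a.e.\ version of your geodesic lemma is delicate. The group $G_{\gamma(t)}$ is Haar-null, so the concatenation of $\gamma|_{[0,t]}$ with $g\gamma|_{[t,1]}$ need not lie in any full-measure non-branching set.
\item (GTB) is assumed on $X$, not on $X/G$, so uniqueness of the plan on the quotient is unjustified.
\item Lifting by a measurable selection throws away exactly the $G$-symmetry that is needed.
\end{itemize}

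The missing idea is to apply the map property of (GTB) to a $G$-\emph{invariant} optimal plan on $X$ itself. This gives the inclusion in the opposite, useful direction.
\begin{itemize}
\item Let $\mu_0\ll m$ and $\mu_1$ be $G$-invariant and $\pi$ optimal between them. Then $\bar\pi:=\int_G (g\times g)_\#\pi\,dg$ is again an optimal coupling of $\mu_0,\mu_1$, since it has the same marginals and the same cost.
\item Using $g_\#\mu_0=\mu_0$, one checks that its disintegration is $\bar\pi_z=\int_G g_\#\pi_{g^{-1}z}\,dg$. By left invariance of Haar measure, this is $G_z$-invariant for every $z$.
\item By (GTB), $\bar\pi_z=\delta_{T(z)}$ for $\mu_0$-a.e.\ $z$, hence $G_z\subseteq G_{T(z)}$. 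A point can only be sent to a single target if its whole isotropy group fixes that target.
\end{itemize}

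Now let $\mu_0,\mu_1$ be the normalised restrictions of $m$ to $C$ and to $X_{(G_{\min})}$. Intersect both with $\{d(\cdot,Gx_0)<R\}$ for $R$ large, to get finite moments and positive mass. For $\mu_0$-a.e.\ $z$, $G_z$ lies in a conjugate of $G_{\min}$ but is not conjugate to it. Since a compact Lie group is not conjugate to a proper closed subgroup of itself, $(G_z)\prec(G_{\min})$ strictly. By countability of types, some type strictly below $(G_{\min})$ carries positive measure, contradicting the minimal choice of $(G_{\min})$.

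With this lemma, your first bookkeeping paragraph completes the proof. No geodesics, interpolants or non-branching are needed.
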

We call $G_{\min}$ the \emph{principal isotropy} of the action of $G$ on $(X,d, m)$. An orbit $G/G_x$ is said to be \emph{principal} if the isotropy of $x$ is (conjugate to) $G_{\min}$.

 We now define the \emph{spherical $(1,N)$-cone} over a metric measure space; see \cite[Definition 5.1]{Ketterer_Conesovermetricmeasurespaces_15} for $(K,N)$-cones. 

\begin{definition}\label{Definition: Spherical $N$-cone (spherical suspension)}
Let $(X,d,m)$ be a metric measure space with $\diam(X)\le \pi$ and let $N\ge 1$.
The \textit{spherical $N$-cone} (equivalently, the \textit{spherical suspension}) over $(X,d,m)$
is the metric measure space
\[
\Con_1^{N}(X):=\bigl( (X\times[0,\pi])/\!\sim,\ d_{\Con},\ m_{\Con}\bigr),
\]
where $(x,0)\sim(y,0)$ for all $x,y\in X$ and $(x,\pi)\sim(y,\pi)$ for all $x,y\in X$.
We write points as $[x,t]$.
For $[x,t],[y,s]\in \Con_1^{N}(X)$ set
$$
d_{\Con}\left([x,t],[y,s]\right)
:=\arccos\left(\cos t\,\cos s+\sin t\,\sin s\cos\left(d(x,y)\right)\right).
$$
Let $\pi: X\times[0,\pi]\to \Con_1^{N}(X)$ be the quotient map and define the suspension measure to be
$$
m_{\Con}:=\pi_{\#}\left(m\otimes \sin^{N}(t)\,dt\right).
$$
\end{definition}

We conclude this subsection with a special case of a theorem of Ketterer (setting $K=1)$, which asserts that the spherical suspension of an $\mathrm{RCD}$ space remains an $\mathrm{RCD}$ space.

\begin{thm}[\protect{\cite{Ketterer_Conesovermetricmeasurespaces_15}}]
Let $N\ge 1$ and let $(X,d,m)$ satisfy $\mathrm{RCD}(N-1,N)$ and $\diam(X)\le \pi$.
Then the spherical $N$-cone $\Con_1^{N}(X)$ is $\mathrm{RCD}(N,N+1)$.
\end{thm}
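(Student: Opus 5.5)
The statement immediately above is the special case $K=1$ of Ketterer's cone theorem. The plan is to quote \cite{Ketterer_Conesovermetricmeasurespaces_15} and check that the hypotheses and conventions match. Ketterer defines the $(K,N)$-cone over $(X,d,m)$ using the model function $\sin_{K/N}$. His main theorem states: for $K>0$, $N\ge 1$, the $(K,N)$-cone over $X$ is $\mathrm{RCD}^*(KN,N+1)$ if and only if $X$ is $\mathrm{RCD}^*(N-1,N)$ with $\diam X\le\pi$. Here the case $N\ge 2$ is unconditional, and the case $N\in[1,2)$ requires the standard extra care that Ketterer treats separately.

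\textbf{Step 1: match the definitions.} Setting $K=1$, Ketterer's cone metric is exactly
\[
\cos d_{\Con}([x,t],[y,s])=\cos t\cos s+\sin t\sin s\cos\bigl(\min\{d(x,y),\pi\}\bigr).
\]
The truncation is vacuous because $\diam X\le\pi$. His measure is $\sin^N(t)\,dt\otimes m$, pushed forward under the quotient. So Definition~\ref{Definition: Spherical $N$-cone (spherical suspension)} coincides with his $(1,N)$-cone.

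\textbf{Step 2: match the curvature conditions.} Ketterer's result is stated for $\mathrm{RCD}^*$. By the equivalence $\mathrm{RCD}^*(\kappa,N)=\mathrm{RCD}(\kappa,N)$ of \cite{Cavalletti+Milman_21,Li_24}, we may pass freely between the two classes. The hypothesis $\mathrm{RCD}(N-1,N)$ then gives $\mathrm{RCD}(KN,N+1)=\mathrm{RCD}(N,N+1)$ for the cone. We should also note that the definition used here, via Bakry--\'Emery together with the Sobolev-to-Lipschitz property, is equivalent to the Lagrangian one by \cite{Erbar_Kuwada_Sturm_15}, so nothing changes.

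\textbf{Step 3: the main obstacle.} If one wanted to argue rather than cite, the hard part would be Ketterer's proof itself. It runs through three pieces:
\begin{itemize}
\item the warped-product structure of the Cheeger energy on the cone, namely $|\nabla u|^2=|\partial_t u|^2+\sin^{-2}(t)|\nabla_x u|^2$;
\item the resulting infinitesimal Hilbertianity;
\item a verification of the Bakry--\'Emery $\mathrm{BE}(N,N+1)$ inequality, obtained by combining the one-dimensional $\mathrm{CD}(N,N+1)$ model $\sin^N$ with the fiber condition $\mathrm{BE}(N-1,N)$.
\end{itemize}
Handling the singular tips $t=0,\pi$ also requires a capacity argument. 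For the present paper we only invoke the theorem, so the real content of the check is the definitional matching in Steps 1 and 2.
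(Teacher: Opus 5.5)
Your proposal is correct and follows the paper's approach: the paper gives no argument beyond quoting Ketterer's cone theorem at $K=1$, with the passage between $\mathrm{RCD}^*$, $\mathrm{RCD}$ and the Bakry--\'Emery formulation covered by the equivalences already recorded in the preliminaries. Two harmless slips in how you describe the source: the model function consistent with the conclusion $\mathrm{RCD}^*(KN,N+1)$ is $\sin_K$, not $\sin_{K/N}$ (with $\sin_{K/N}$, setting $K=1$ would not give your Step~1 formula, and for integer $N$ the cone over the round $\mathbb{S}^N$ would only have Ricci bound $K$); and the theorem as invoked covers all $N\ge 1$, with the diameter hypothesis doing real work only at $N=1$ (for $N>1$ it follows from Bonnet--Myers), so you do not need any separate treatment of $N\in[1,2)$.
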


\subsection{Alexandrov Geometry}\label{Subsection: Alexandrov Geometry} In this subsection, we give the basic definitions and properties which we will use regarding Alexandrov spaces. For more on these spaces, we refer the reader to \cite{BBI_course_on_metric_geometry_01, BGP_SCBB_1_92}. 

\begin{definition}[Alexandrov space]\label{Definition: Alexandrov space}
    Let $(X,d)$ be a complete, locally compact length space. For $x,y\in X$, let $xy$ denote a minimal geodesic from $x$ to $y$, which is parametrised proportionately by arc length. Let $\triangle_{xyz}$ denote a triangle with vertices $x,y,z\in X$, where the sides are the minimal geodesics $xy, yz, zx$. We say that $(X,d)$ is an \textit{Alexandrov space} of curvature bounded below by some $\kappa\in \mathbb{R}$, if 
    for any admissible triangle $\triangle_{x_1x_2x_3}$ in $X$, there exists a triangle $\tilde{\triangle}_{x_1x_2x_3}$ in a simply connected space form $(S_\kappa,d_\kappa)$ of constant curvature $\kappa$, such that $d(x_i,x_j)=d(\tilde{x_i},\tilde{x_j})$, for all $i,j$, such that 
    $$
d(x_1x_2(s),x_1x_3(t)) \geq d_\kappa(\tilde{x_1}\tilde{x_2}(s),\tilde{x_1}\tilde{x_3}(t)),
    $$
    for any $s,t\in[0,1]$.
\end{definition}
\begin{definition}[Space of Directions] \label{Definition: Space of directions}
    Let $X$ be a length space, and assume that angles exist between shortest paths. The set of geodesics emanating from the point $p$, where we identify the geodesics whose angle between them is zero, is denoted
    $$
\Sigma_p' := \{ [pq] : q \in X \setminus \{p\} \} /\sim,
    $$
    where for two geodesics $\gamma,\gamma'\colon[0,1] \rightarrow X$, with $\gamma(0)=\gamma'(0)=p$ and $\gamma(1) = x, \gamma'(1) =y$, then $\gamma \sim \gamma'$ if $\measuredangle xpy =0 $.
    Additionally, since we know that the angles exist between all shortest paths, the angle between them defines a metric on $\Sigma_p'$. However, this metric space may not be complete, hence let $\Sigma_p$ denote the completion of $(\Sigma_p',\angle )$ in the sense of a metric completion. We define $(\Sigma_p,\angle)$ to be \textit{the space of directions at }$p$.
\end{definition}
The space of directions is a fundamental concept in Alexandrov geometry. This notion can be used to obstruct a space from being Alexandrov. That is, given any Alexandrov space, the space of directions must be an Alexandrov space with curvature bounded below by $1$ (which we abbreviate to $\mathrm{CBB}(1))$; see \cite{BGP_SCBB_1_92}. We will also rely on the fact that submetries preserve lower curvature bounds; this includes quotient maps \cite{BGP_SCBB_1_92}.

\subsection{Group Theory} This subsection is dedicated to the group theory that we will rely on in the remainder of the paper. We recall first that, in \cite{Sutton_10}, Sutton defines \emph{equivariant isospectrality} for Riemannian manifolds. Here, we modify this definition to include the case where the space is a compact infinitesimally Hilbertian metric measure space with compact resolvent. We will then define the notion of \emph{$K$-equivalence}.

Let $(X,d, m)$ be an infinitesimally Hilbertian metric measure space with compact resolvent, and let $\Delta_X$ be the Laplacian of $(X,d,m)$ (as defined in Definition \ref{Definition: Laplacian}). Let $G$ be a compact Lie group acting on $X$ by $m$-preserving isometries, and let $\tau^{G}$ be the induced unitary representation on $L^{2}(X, m)$ given by
$$
(\tau^{G}(g)f)(x) = f\!\left(g^{-1}\cdot x\right),
$$
where $g\in G,\ f\in L^{2}(X, m)$. Let $\widehat G$ denote the set of equivalence classes of irreducible unitary representations of $G$. For each $[\rho]\in \widehat G$, define $L^{2}_{\rho}(X)\subset L^{2}(X,m)$ to be the closed linear span of all irreducible $G$-invariant subspaces $W\subset L^{2}(X,m)$ on
which $\tau^{G}$ is equivalent to $\rho$. Then
$$
L^{2}(X, m) = \bigoplus_{[\rho] \in \widehat G} L^{2}_{\rho}(X),
$$
an orthogonal decomposition into $G$-invariant closed subspaces. 

\begin{definition} \label{Defintion: G-isospectral}
Let $(X_1,d_1, m_1)$ and $(X_2,d_2, m_2)$ be two metric measure spaces with $m$-preserving isometric $G$-actions, with Laplacians $\Delta_{X_1},\Delta_{X_2}$ with compact resolvent, each commuting with the corresponding
induced $G$-action. We say that $X_1$ and $X_2$ are \textit{$G$-isospectral} if, for every
$[\rho]\in \widehat G$,
$$
\mathrm{Spec}\!\left(\Delta_{X_1}\big|_{L^{2}_{\rho}(X_1)}\right)
=
\mathrm{Spec}\!\left(\Delta_{X_2}\big|_{L^{2}_{\rho}(X_2)}\right),
$$
with multiplicity.
\end{definition}

\begin{remark}\label{Remark: equivillent G-isospec def} One can also state the following equivalent intertwining formulation of the definition above, which was noted in \cite{Sutton_10}. The spaces above are $G$-isospectral if and only if there exists a unitary operator
$U:L^{2}(X_{1},m_{1})\to L^{2}(X_{2},m_{2})$ such that
$$
U \Delta_{X_1} = \Delta_{X_2} U,
 \ \text{and} \ \ 
U\,\tau^{G}_{1}(g) = \tau^{G}_{2}(g)U\, \ \ \  \forall g\in G.
$$
\end{remark}

\begin{definition}[$K$-equivalence]\label{Definition :K-equivalence}
Let $G$ be a compact Lie group and let $K\leq G$ be a closed subgroup. 
Two closed subgroups $H_1,H_2\leq G$ are called \textit{$K$-equivalent} if
$$
\dim\left(V_\tau^{H_1}\right)=\dim\left(V_\tau^{H_2}\right)
 \ \ \text{for every irreducible }\tau\in\widehat{G}\text{ with }\tau^{K}\neq 0.
$$
\end{definition}

\subsection{Sunada-type Constructions} In this final subsection, we recall the relevant work concerning the Sunada method, to put the theorems in this paper in context. We begin with Sunada's original theorem, inspired by Galois theory and Galois towers. One can see this via diagram \eqref{diag:sunada} below the following theorem. Subsequently, we state Sutton's generalisation of Sunada's construction. This method of engineering isospectral non-isometric spaces has been very effective, see \cite{Adelstein_Sandoval_17, An_Yu_Yu_13, Gordon_Survey_isospec_00, Sunada_RiemCoveringsAndIsopecMfds}. Other extensions, applications and discussions of the Sunada-type constructions are also available, see \cite{Adelstein_Sandoval_17, Gordon_2_dec_on_09, Qin_spectra_19,Smit_Gornet_sutton-covering-spec-10,Sutton-specgeom-symetry-21}.

\begin{thm}[Sunada \cite{Sunada_RiemCoveringsAndIsopecMfds}, 1985] \label{Thm: Sunada '85}
Let $M$ be a compact Riemannian manifold, and let $\pi\colon  M \rightarrow M/G$ be a normal finite Riemannian covering, with covering transformation group $G$. Let $M_i:=M/H_i$, and $\pi_1 \colon M_1 \rightarrow M/G$ and $\pi_2\colon M_2 \rightarrow M/G$ be the coverings of the corresponding subgroups $H_1$ and $H_2$ of $G$ respectively. If the conjugacy classes of $G$ intersect $H_1$ and $H_2$ the same number of times, then the zeta functions of $M_1$ and $M_2$ are equal, i.e.\ the spaces $M_1$ and $M_2$ are isospectral. 
\end{thm}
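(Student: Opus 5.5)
The statement to prove is Sunada's theorem (Thm.~\ref{Thm: Sunada '85}). The plan is the classical representation-theoretic one: realise the spectrum of $M_i=M/H_i$ inside the spectrum of $M$ as the $H_i$-invariant part, and show via characters that the $H_1$- and $H_2$-invariant parts of each eigenspace have equal dimension.

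First I fix an eigenvalue $\lambda$ of $\Delta_M$ and let $E_\lambda\subset C^\infty(M)$ be its eigenspace. It is finite dimensional by compactness of $M$. Since $G$ acts by isometries, $\Delta_M$ commutes with the induced action $\tau^G$, so $E_\lambda$ is a finite-dimensional representation of $G$.

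Next, because $\pi_i\colon M\to M_i$ is a Riemannian covering, and hence a local isometry, pullback identifies functions on $M_i$ with $H_i$-invariant functions on $M$. Since the Laplacian is local, this identification intertwines $\Delta_{M_i}$ with $\Delta_M$. Hence the $\lambda$-eigenspace of $M_i$ is isomorphic to $E_\lambda^{H_i}$, and the multiplicity of $\lambda$ in $\mathrm{Spec}(M_i)$ equals $\dim E_\lambda^{H_i}$. Conversely, every eigenfunction of $M_i$ lifts to an eigenfunction of $M$, so no eigenvalues are lost.

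The main step is the representation-theoretic one: showing $\dim V^{H_1}=\dim V^{H_2}$ for every finite-dimensional representation $V$ of $G$. I will use the averaging projector. Let $\chi_V$ denote the character of $V$. Then
\[
\dim V^{H}=\frac{1}{|H|}\sum_{h\in H}\chi_V(h)=\frac{1}{|H|}\sum_{C}|C\cap H|\,\chi_V(C),
\]
where the second sum runs over conjugacy classes $C$ of $G$ and we use that characters are class functions. The hypothesis is $|C\cap H_1|=|C\cap H_2|$ for all $C$. Taking $C=G$-orbits summed over all classes gives $|H_1|=|H_2|$, so the two expressions agree.

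Applying this with $V=E_\lambda$ gives equal multiplicities for every $\lambda$, so $M_1$ and $M_2$ are isospectral, and their spectral zeta functions coincide.

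The only point needing some care is the identification of the spectra in the second step, which relies on a covering being a local isometry. The argument is otherwise routine.
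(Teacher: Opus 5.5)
Your argument is correct. The paper does not prove this theorem; it quotes it from Sunada as background. The closest proof in the paper is that of Theorem~\ref{Theorem: equivarient Sunada gen}, and your argument is essentially its specialisation to $X_1=X_2=M$, $G$ finite, $K=\{e\}$, $U=\mathrm{id}$. In both, the multiplicity of $\lambda$ on the quotient is $\dim (E_\lambda)^{H_i}$, and one then compares fixed-point dimensions.

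There are two small differences.

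First, the paper takes $K$-equivalence as a hypothesis and decomposes $E_\lambda$ into irreducibles via $\dim(E_\lambda)^H=\sum_\pi m_\lambda(\pi)\dim V_\pi^H$. You instead derive $\dim V^{H_1}=\dim V^{H_2}$ for every finite-dimensional $V$ directly from almost conjugacy, using $\dim V^H=|H|^{-1}\sum_{h\in H}\chi_V(h)$. This is exactly the step "almost conjugacy implies $\{e\}$-equivalence" that is needed to see Sunada's theorem as a special case of Theorem~\ref{Theorem: equivarient Sunada gen}, and the paper never writes it down. Applying the identity to $E_\lambda$ itself also makes the irreducible decomposition unnecessary.

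Second, you identify the quotient Laplacian by noting that $M\to M/H_i$ is a local isometry, which gives the Riemannian Laplacian of $M_i$ directly. The paper instead identifies $L^2(X/H,m_H)$ with $L^2(X,m)^H$ through the Cheeger energy and the push-forward measure. Because the covering action is free, the push-forward measure is $|H_i|$ times the Riemannian volume of $M_i$. A constant rescaling of the measure does not change the Laplacian, so both routes give the same operator. Your route is more elementary; the paper's is the one that still works when the action is not free and the quotient is singular.
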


\begin{equation}
\label{diag:sunada}
\begin{tikzcd}[row sep=large, column sep=huge]
& M \arrow[dl, two heads, swap, "/H_1"] 
    \arrow[dr, two heads, "/H_2"] 
    \arrow[dd, two heads, "/G"] & \\
M_1 \arrow[dr, two heads, swap, "\pi_1"] & & 
M_2 \arrow[dl, two heads, "\pi_2"] \\
& M/G &
\end{tikzcd}    
\end{equation}

Pesce generalised Sunada’s construction by formulating the method in terms of Lie group actions and relative equivalence of representations \cite{Pesce_sunada}. In particular, for a Riemannian manifold equipped with an isometric action of a Lie group $G$, Pesce replaced Sunada’s almost-conjugacy condition by the weaker condition of $K$-equivalence, where $K$ is the generic stabiliser of the action. Sutton \cite{Sutton_03} extended the Sunada--Pesce method further by allowing quotients by non-trivial closed, and in particular connected, subgroups of a compact Lie group. Later, Sutton \cite{Sutton_10} developed an equivariant version of the construction, allowing one to start from two possibly different Riemannian $G$-manifolds, provided that they are $G$-isospectral and have the same generic stabiliser. For more on how representation theory and spectral geometry interact, we refer the reader to \cite{Sutton-thesis}.

\begin{thm}[Sutton \cite{Sutton_10}, 2010]\label{Thm: Sutton '10} Let $M_1$ and $M_2$ be two compact isospectral Riemannian manifolds and let $G$ be a compact Lie group such that:
\begin{enumerate}
    \item $G$ acts by isometries on $M_1$ and $M_2$,
    \item $M_1$ and $M_2$ are equivariantly isospectral with respect to $G$,
    \item The actions of $G$ on $M_1$ and $M_2$ have the same generic stabiliser $K \leq G$.
    \end{enumerate}
    Furthermore, let $H_1$ and $H_2$ be two closed subgroups of $G$ that are $K$-equivalent and act on $M_1$ and $M_2$ respectively such that their Riemannian submersions to their quotients, with the natural induced metric, have minimal fibres. Then $M_1/H_1$ and $M_2/H_2$ are isospectral on functions.
\end{thm}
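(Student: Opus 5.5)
The statement to prove is Sutton's theorem (Theorem~\ref{Thm: Sutton '10}) for smooth compact Riemannian $G$-manifolds. The plan has three steps: reduce the spectrum of $M_i/H_i$ to the $H_i$-invariant spectrum of $M_i$, decompose that into $G$-isotypic pieces, and compare the pieces using $G$-isospectrality and $K$-equivalence. Throughout I take the standard reading of ``Riemannian submersion'' in the singular case: the map $\pi_{H_i}\colon M_i\to M_i/H_i$ is a Riemannian submersion on the principal part. The quotient Laplacian is defined there and then closed up in $L^2$.

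\textbf{Step 1 (analytic reduction).} Suppose a Riemannian submersion $\pi\colon M\to B$ has minimal fibres, i.e.\ the mean curvature vector of the fibres vanishes. Then for smooth $f$ on $B$ one has $\Delta_M(f\circ\pi)=(\Delta_B f)\circ\pi$. Indeed, in general $\Delta_M(f\circ\pi)=(\Delta_B f)\circ\pi + \langle \mathbf{H},\nabla(f\circ\pi)\rangle$, where $\mathbf{H}$ is the mean curvature vector of the fibres, and the correction term vanishes when $\mathbf{H}=0$.

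Two further facts are needed. First, fibres of $\pi_{H_i}$ are $H_i$-orbits, so pullback identifies smooth functions on the quotient with $H_i$-invariant smooth functions on $M_i$. Second, since the volume is pushed forward and orbit volumes are constant for minimal fibres, pullback is (up to a constant) an isometry $L^2(M_i/H_i)\cong L^2(M_i)^{H_i}$.

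Since $\Delta_{M_i}$ commutes with $H_i$, its eigenspaces split into $H_i$-invariant and non-invariant parts. We conclude that the spectrum of $M_i/H_i$ equals the spectrum of $\Delta_{M_i}$ restricted to $L^2(M_i)^{H_i}$, with multiplicities. The singular strata have measure zero and the invariant eigenfunctions are smooth, so the closure issue is harmless.

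\textbf{Step 2 (representation theory).} For an eigenvalue $\lambda$, decompose the eigenspace as $E^i_\lambda=\bigoplus_{\rho\in\widehat G} m^i_\lambda(\rho)\,V_\rho$. Then
\[
\dim (E^i_\lambda)^{H_i}=\sum_{\rho} m^i_\lambda(\rho)\dim V_\rho^{H_i}.
\]
$G$-isospectrality (Definition~\ref{Defintion: G-isospectral}) gives $m^1_\lambda(\rho)=m^2_\lambda(\rho)$ for every $\rho$ and $\lambda$.

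The key input is Donnelly's theorem \cite[Theorem 2.3]{Donnelly}, applied to the generic stabiliser $K$. It says every $\rho$ with $m^i_\lambda(\rho)>0$ satisfies $V_\rho^K\neq0$. I will verify the mechanism as follows. A nonzero $\rho$-isotypic eigenfunction is real analytic, so it does not vanish identically on the open dense set of principal points. Evaluating the $G$-span of the eigenfunction at a point whose isotropy is $K$ gives a nonzero $G$-equivariant map $V_\rho^*\to\mathbb{C}[G/K]$. By Frobenius reciprocity this forces $(V_\rho^*)^K\neq 0$, and hence $V_\rho^K\neq0$.

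\textbf{Step 3 (conclusion).} For every $\rho$ that contributes, $K$-equivalence (Definition~\ref{Definition :K-equivalence}) gives $\dim V_\rho^{H_1}=\dim V_\rho^{H_2}$. Therefore $\dim (E^1_\lambda)^{H_1}=\dim (E^2_\lambda)^{H_2}$ for all $\lambda$. Combined with Step 1, this shows $M_1/H_1$ and $M_2/H_2$ are isospectral.

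\textbf{Main obstacle.} I expect the hardest step to be Step 1 in the presence of non-principal $H_i$-orbits. One must check that the quotient Laplacian (on the orbifold/stratified quotient) has the same spectrum as the restriction to $H_i$-invariant functions. My plan is to use the quadratic form: invariant $W^{1,2}$ functions correspond to $W^{1,2}$ functions on the quotient with equal energies, since the gradient of an invariant function is horizontal. Because the singular set has measure zero, the form domains agree, so the associated self-adjoint operators correspond.
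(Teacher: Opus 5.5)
Your proposal is correct and follows essentially the route the paper indicates for recovering Sutton's theorem. That route is the argument of Theorem~\ref{Theorem: equivarient Sunada gen} (quotient multiplicities equal $\dim(E^i_\lambda)^{H_i}$, isotypic counting with $m^1_\lambda(\rho)=m^2_\lambda(\rho)$, then $K$-equivalence), Donnelly's theorem to supply (A1), and minimal fibres making the push-forward measure a constant multiple of the Riemannian volume so no drift term appears (Remark~\ref{Remark: Comparison with Sutton}); your Frobenius-reciprocity check is equivalent to the averaging over $G_x$ in the proof of Corollary~\ref{Corollary: Removing (A1) from Theorem A for RCD*}. One correction: eigenfunctions of a merely smooth metric need not be real analytic, but continuity already prevents a nonzero eigenfunction from vanishing on the dense principal set, which is all you use; your singular-strata concern also does not arise, since the hypotheses make each $\pi_{H_i}$ a genuine Riemannian submersion.
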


\section{Proof of Theorem~\ref{Theorem: equivarient Sunada gen}}
The proof strategy of Theorem \ref{Theorem: equivarient Sunada gen} is similar to that in \cite{Parzanchevski_13, Pesce_sunada, Sutton_10, Sutton_03}. Fix $i \in \{1,2\}$ and let $H \leq G$ be closed. We know that the action preserves measures via isometries, and since $X_i$ is an infinitesimally Hilbertian metric measure space, by \cite[Section 5]{GGKMS_qofwrcbb_18} the pull-back by $\pi_{i,H}$ gives a unitary identification $L^2(X_i/H,m_{i,H}) \cong L^2(X_i,m_i)^H$, under which the Cheeger energy of $X_i/H$ agrees with the restriction of the Cheeger energy of $X_i$ to $H$-invariant functions. Hence, by uniqueness of the non-negative self-adjoint operator associated to a closed quadratic form, the quotient Laplacian is unitarily equivalent to the restriction of $\Delta_{X_i}$ to $L^2(X_i,m_i)^H$. This operator thus inherits a discrete spectrum. In particular, if $\lambda$ is an eigenvalue of the quotient Laplacian, then its multiplicity satisfies
$\mathrm{mult}_\lambda(X_i/H)=\dim(E_\lambda^i)^H$, where 
$$
E_\lambda^i := \left\{ f \in D(\Delta_{X_i}) \ | \ \Delta_{X_i}f = \lambda f \ \right\}.
$$

Now, since $(X_1,d_1,m_1)$ and $(X_2,d_2,m_2)$ are $G$-isospectral, we have a unitary map 
$$
U\colon L^2(X_1,m_1) \rightarrow L^2(X_2,m_2),
$$
that intertwines the Laplacians and the $G$-representations, see Remark \ref{Remark: equivillent G-isospec def}. Furthermore, the map 
$$
U\biggr |_{E^1_\lambda}\colon E^1_\lambda\rightarrow E^2_\lambda,
$$
is a representation isomorphism, so the multiplicity of each irreducible representation $\pi$ in $E^1_\lambda$ is equal to the multiplicity of that irreducible representation in $E^2_\lambda$, that is, $m_\lambda^1(\pi)=m_\lambda^2(\pi)$.

By assumption, each Laplacian has compact resolvent (hence discrete eigenvalues), and $G$ is compact, so we can decompose $E^i_\lambda$ as 
$$
E^i_\lambda \cong \bigoplus_{\pi\in \hat{G}} m^i_\lambda(\pi) \  V_\pi,
$$
where $\hat{G}$ denotes the set of irreducible representations of $G$, each $m^i_\lambda(\pi)$ is finite, and only finitely many terms are non-zero. Taking $H$ fixed vectors gives
$$
\dim(E_\lambda^i)^H = \sum_{\pi\in \hat{G}} m^i_\lambda(\pi) \dim(V_\pi^H).
    $$
Moreover, note that the coefficients $m^i_\lambda(\pi)$ are the same for both $i=1,2$, as remarked in the preceding paragraph.  

Define $\hat{G}_K :=\{\pi \in \hat{G} \ | \ V_\pi^K \neq 0 \}$. By assumption (A1), if $m^i_\lambda(\pi)\neq0$, then $\pi \in\hat{G}_K$, and thus
$$
\dim(E^i_\lambda)^H = \sum_{\pi \in \hat{G}_K} m^i_\lambda(\pi) \dim(V_\pi^H).
$$
Since $H_1,H_2$ have the property that for every $\pi$, with $V_\pi^K\neq0$, $\dim(V_\pi^{H_1}) = \dim(V_\pi^{H_2})$, we can see that $\dim(E^1_\lambda)^{H_1} = \dim(E^2_\lambda)^{H_2}$. Therefore, we can conclude that $(X_1/H_1,d_{1,H_1},m_{1,H_1})$ is isospectral to $(X_2/H_2,d_{2,H_2},m_{2,H_2})$. \qed

\begin{remark}\label{Remark: Comparison with Sutton}
    The main difference between Theorem~\ref{Theorem: equivarient Sunada gen} and either of Sutton's generalisations of the Sunada technique \cite{Sutton_10, Sutton_03}, when the base space is a smooth manifold,  is the measure that one equips the quotient space with. In Theorem \ref{Theorem: equivarient Sunada gen}, we give the quotient the push-forward measure, whereas in Sutton’s setting the quotient is equipped with the natural volume measure on the quotient space. With the push-forward measure, Theorem \ref{Theorem: equivarient Sunada gen} gives that the Laplacians in the quotient will be isospectral, even with a possible drift term from the intrinsic Laplacian on the quotient space, coming from the volume form of the quotient. For Sutton to relate the intrinsic Laplacian on the quotient space to the Laplacian on the base space, the author requires minimal fibres to conclude that the quotient spaces are isospectral. If one asks for minimal fibres in the setting of a smooth base manifold, in Theorem \ref{Theorem: equivarient Sunada gen}, then the drift term vanishes, and one recovers the intrinsic Laplacian of the quotient, associated with the natural volume form on the quotient space, recovering Sutton's theorem. Thus, the measures which the quotients are equipped with are not necessarily the `canonical' measures of the space, but are the weighted measures arising from the push-forward measure of the base space.
\end{remark}
\section{Proofs of Corollaries and Applications}
\begin{proof}[Proof of Corollary \ref{Corollary: Removing (A1) from Theorem A for RCD*}] We know that the Laplacian on compact $\mathrm{RCD}$ spaces has compact resolvent \cite{Ambrosio_Honda_Portegies_21}. These spaces are geodesic and by definition infinitesimally Hilbertian \cite{Ambrosio_gigli_savare_RCD_orig_14}. Furthermore, using \cite[Section 5]{GGKMS_qofwrcbb_18}, we have for $i=1,2$, for any closed subgroup $H\leq G$, $L^2(X_i/H, m_{i,H}) \cong L^2(X_i,m_i)^H$. Thanks to \cite[Theorem 5.12]{GGKMS_qofwrcbb_18}, one obtains that the Laplacian of the quotient is the restriction of the Laplacian of the base space, restricted to $H$-invariant functions. $\mathrm{RCD}$ spaces have the (GTB) property, thus \cite[Theorem 1.3]{GGKMS_qofwrcbb_18} tells us that there exists $G_{\mathrm{min}}\leq G$, unique up to conjugacy, such that the orbit is of type $G/G_{\mathrm{min}}$, for $m_i$-almost everywhere. We take $K_0 = G_{\mathrm{min}}$ and define the principal set to be 
$$
\mathcal{P}_i=\{ x\in X_i \ | G_x \text{ is conjugate to } K_0 \},
$$
noting that $m_i(X_i\setminus \mathcal{P}_i) =0$.

Fix $i\in\{ 1,2\}$, $\lambda$ an eigenvalue of $\Delta_{X_i}$, and an irreducible $G$-representation $\pi$ occurring in the eigenspace $E_\lambda^i$. Let us show that $\dim \pi^{K_0} >0$. Since $E^i_\lambda$ is a finite-dimensional $G$-representation, we know that it decomposes into a finite direct sum of irreducible representations. Thus there is a $G$-invariant subspace $W\subset E^i_\lambda$ such that $(W,\rho|_W) \cong (V_\pi,\pi)$. Pick a non-zero function $u \in W$. Using \cite{Jiang_lipschitz_eigenfuctions_14}, there exists a Lipschitz representative of $u$ , and thus the non-zero set of the representative of $u$ is open. Since $\mathrm{supp}(m_i) = X_i$, any non-empty open set has positive measure. Thus, the non-zero set intersects the principal set with full measure. Choose a principal point $x$, with $u(x)\neq0$. Then we know that $x$ has principal isotropy group conjugate to $K_0$. Now, we will average $u$ over the compact group $G_x$:
$$
u_{G_x} : = \int_{G_x} (g\cdot u) \ dg 
$$
where $dg$ is the normalised Haar measure. This quantity is $G_x$-invariant by construction. Furthermore, $u_{G_x}(x)=u(x)\neq0$, since $g\cdot u$ evaluated at $x$ is $(g \cdot u)(x) = u(g^{-1}x)=u(x)$. Thus the $\pi$-space contains a non-zero $G_x$-fixed vector, and thus $\pi^{G_x}\neq0$. $G_x$ is conjugate to $K_0$, thus $\pi^{K_0}\neq0$, proving that $\dim \pi^{K_0} >0$, as we wanted.

Thus, for compact $\mathrm{RCD}(\kappa,N)$ spaces, if the subgroup $K$ in Theorem \ref{Theorem: equivarient Sunada gen} is chosen to be the principal isotropy subgroup, then (A1) is automatically satisfied. Applying Theorem \ref{Theorem: equivarient Sunada gen} yields the Corollary.
\end{proof}
\begin{proof}[Proof of Corollary \ref{Corollary :Cone statement}]
    It is known that the $(1,N)$-cone over an $\mathrm{RCD}(N-1,N)$ space is $\mathrm{RCD}(N,N+1)$ \cite{Ketterer_Conesovermetricmeasurespaces_15}. Furthermore, we can act isometrically on the suspension, by acting only on the $X$ factor, that is, for $g\in G$ and $[x,t] \in \Con_1^N(X)$, then the induced action on the spherical cone is $g \cdot [x,t]= [g \cdot x,t]$. Thus, we identify $\Con_1^N(X/H_i) \cong \Con_1^N(X)/H_i$ isometrically as metric measure spaces. Via \cite[Theorem 1.1]{GGKMS_qofwrcbb_18}, the quotients are also $\mathrm{RCD}(N,N+1)$. If $K=\{ e\}$, then Theorem \ref{Theorem: equivarient Sunada gen} gives the corollary. If $K$ is a representative of the principal isotropy group, applying Corollary \ref{Corollary: Removing (A1) from Theorem A for RCD*} to $\Con_1^N(X)$, we know that the quotients $\Con_1^N(X/H_1)$ and $\Con_1^N(X/H_2)$ are isospectral. 
\end{proof}

\begin{remark}\label{Remark: Cone points remark}
     Using Corollary \ref{Corollary :Cone statement}, one may tailor oneself into constructing RCD non-Alexandrov, or Alexandrov non-orbifold spaces.  In the following, by \emph{orbifold} we mean Riemannian orbifold, and CBB$(k)$ denotes curvature bounded below by $k$ in the Alexandrov triangle comparison sense. Furthermore, by \emph{spherical orbifold}, we mean a Riemannian orbifold of constant sectional curvature $1$.
Assuming that $X/H_i$ is not a CBB(1) space, then the image of the spherical suspension point of $\Con_1^N(X)$ under $H_i$ will have space of directions $X/H_i$. Since this does not have CBB(1), this cannot be an Alexandrov space \cite[Corollary 7.10]{BGP_SCBB_1_92}. 
Now, assuming that $X/H_i$ is not a spherical orbifold, then one can apply the same argument as before, and see that the space of directions of the image of the suspension point is not a spherical orbifold, and the space $\Con_1^N(X/H_i)$ cannot be an orbifold.
\end{remark}
In the following example, we use the same groups as in \cite{An_Yu_Yu_13}. However, we will arrange a situation where we can use Corollary \ref{Corollary :Cone statement} to provide an example of isospectral, non-isometric $\mathrm{RCD
}$ spaces which are not Alexandrov, which to the author’s knowledge, form the first such pair. 
\begin{exmp}\label{Example: isospectral RCD spaces}
    With the embeddings used in \cite[Theorem 1.5]{An_Yu_Yu_13}, consider the groups $G=\mathrm{SU}(6)$, $H_1=\mathrm{U}(3)$ and $H_2=\mathrm{Sp}(1) \times \mathrm{SO}(4)$ and $K=\{e\}$. Thanks to \cite[Theorem 1.5]{An_Yu_Yu_13}, we know $H_1$ and $H_2$ are $K$-equivalent, and \cite[Theorem 1.7]{An_Yu_Yu_13} $G/H_1$ and $G/H_2$ are isospectral, but are not homeomorphic. Give $\mathrm{SU}(6)$ the bi-invariant metric $g$, scale if needed to ensure that $\Ric(\mathrm{SU}(6)) \geq 34g $ and $\mathrm{diam}( \mathrm{SU}(6)) < \pi$. Thus applying Corollary \ref{Corollary :Cone statement} to $G$, we know that the quotients are isospectral $\mathrm{RCD}(35,36)$ spaces. We now show that $\mathrm{SU(6)}/H_i$ has a plane of curvature zero, thus the spaces $Y_i:=\mathrm{Con}_1^{35}(G)/H_i$ are not Alexandrov.

    Note that as $G/H_1$, $G/H_2$ are normal homogeneous, they have non-negative curvature; furthermore, the rank of $H_1$ and $H_2$ is $3$ and the rank of $SU(6)$ is $5$. By \cite{Berger} (see \cite[Lemma 1.2]{Wilking_Ziller_18} for a modern treatment), $G/H_1$ and $G/H_2$ do not admit positive curvature with their normal homogeneous metrics, and must have a plane of zero curvature. Therefore $Y_i$ is not Alexandrov, but is $\mathrm{RCD}$. Finally, we know that $Y_1$ and $Y_2$ are not isometric, since $G/H_1$ and $G/H_2$ are not homeomorphic \cite[Theorem 1.7]{An_Yu_Yu_13}.
\end{exmp}
We now produce a pair of isospectral Alexandrov spaces, again appealing to Corollary \ref{Corollary :Cone statement} and using Remark~\ref{Remark: Cone points remark} above. We will use the groups given as in Example \ref{Example: isospectral RCD spaces} \cite[Theorem 1.2]{Adelstein_Sandoval_17} \cite[Theorem 1.5]{An_Yu_Yu_13}, and the spherical join construction, to ensure that the space of directions is not a spherical orbifold, forcing the space to be an Alexandrov space that is not a Riemannian orbifold. We will rely on the following lemma, which is a consequence of \cite[Theorem 1.1]{Lytchak-Foertsch-08}. Recall, \textit{affine rank} of a space $X$ is defined as the supremum over all topological dimensions of affine spaces that admit an isometric embedding into $X$. Note that as affine rank is bounded above by topological dimension, then finite-dimensional Alexandrov spaces have finite affine rank.

\begin{lemma}\label{Lemma: Join lemma}
    Let $X,Y,Z$ be compact, finite-dimensional geodesic metric spaces with diameter at most $\pi$. If $X$ and $Y$ are non-isometric, then $X * Z$ and $Y*Z$ are non-isometric.
\end{lemma}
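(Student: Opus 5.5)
We will deduce the lemma from the de Rham-type decomposition theorem for spherical joins of Foertsch--Lytchak \cite[Theorem 1.1]{Lytchak-Foertsch-08}. In the form we need, it says: a compact, finite-dimensional geodesic metric space of diameter at most $\pi$ (hence of finite affine rank) splits as a spherical join
$$
W \cong \mathbb{S}^{k} * W_1 * \dots * W_m,
$$
where $\mathbb{S}^k$ is a round sphere (the \emph{Euclidean} or spherical factor, possibly empty) and the $W_j$ are join-irreducible factors that are not themselves spheres. This decomposition is unique up to permutation of the $W_j$ and isometry. Our plan is to argue by contraposition. Suppose $\Phi\colon X*Z\to Y*Z$ is an isometry; we want to show that $X\cong Y$.

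\textbf{Step 1.} We check the hypotheses of \cite[Theorem 1.1]{Lytchak-Foertsch-08} for $X*Z$ and $Y*Z$. The spherical join of spaces with diameter at most $\pi$ is a geodesic space, so geodesicity is inherited. Compactness is clear, since the join is a quotient of $X\times Z\times[0,\pi/2]$. The join has topological dimension $\dim X+\dim Z+1<\infty$, so its affine rank is finite, as remarked before the lemma. In particular, $X$, $Y$ and $Z$ each admit the decomposition above.

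\textbf{Step 2.} We write the decompositions
$$
X\cong \mathbb{S}^{a}*X_1*\cdots*X_p,\qquad Y\cong\mathbb{S}^{b}*Y_1*\cdots*Y_q,\qquad Z\cong \mathbb{S}^{c}*Z_1*\cdots*Z_r .
$$
Associativity and commutativity of the spherical join, together with $\mathbb{S}^a*\mathbb{S}^c\cong\mathbb{S}^{a+c+1}$, give
$$
X*Z\cong \mathbb{S}^{a+c+1}*X_1*\cdots*X_p*Z_1*\cdots*Z_r,
$$
and similarly for $Y*Z$. By uniqueness in \cite[Theorem 1.1]{Lytchak-Foertsch-08}, the isometry $\Phi$ forces the spherical factors to agree, so $a=b$. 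It also forces the multisets of irreducible factors $\{X_i\}\cup\{Z_l\}$ and $\{Y_j\}\cup\{Z_l\}$ to agree up to isometry. Cancelling the common factors $Z_l$ in the multiset, we get $p=q$ and $X_i\cong Y_{\sigma(i)}$ for some permutation $\sigma$. Reassembling the joins gives $X\cong Y$, which contradicts the hypothesis.

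\textbf{Main obstacle.} We expect the difficulty to lie in the bookkeeping for degenerate cases and in matching conventions. Some factors may be empty or zero-dimensional; for example, $\mathbb{S}^0$ is two points at distance $\pi$, and the join with the empty set is the identity. Points that arise as joins of a single factor also need care. We will handle these by adopting the convention $\mathbb{S}^{-1}=\emptyset$ and by checking that the uniqueness statement in \cite{Lytchak-Foertsch-08} is formulated for exactly this class of spaces, with the Euclidean factor counted by dimension. The multiset-cancellation step is where finiteness of the decomposition is essential. That finiteness comes from finite affine rank, since each join factor adds at least one dimension.
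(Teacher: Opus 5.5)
Your argument is correct. It runs on the same mechanism as the paper's proof: uniqueness of the Foertsch--Lytchak decomposition, followed by cancelling the factors coming from $Z$. The difference is that you work at the level of joins, while the paper works at the level of cones.

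Note that \cite[Theorem 1.1]{Lytchak-Foertsch-08} is the de Rham decomposition for \emph{direct products} of geodesic spaces of finite affine rank, not for spherical joins. The join version you state follows from it by applying it to the Euclidean cone, using three facts:
\begin{itemize}
\item $C(A*B)\cong C(A)\times C(B)$;
\item dilations about the vertex respect the de Rham factors, so each factor of $C(W)$ is a cone $C(W_j)$ through the vertex;
\item an isometry $W\to W'$ induces a vertex-preserving isometry $C(W)\to C(W')$.
\end{itemize}
Accordingly, the hypotheses to check in your Step 1 concern $C(X*Z)\cong C(X)\times C(Z)$: it must be geodesic and finite-dimensional. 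Both follow from what you verified about $X*Z$.

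The paper carries out this unpacking explicitly. It first proves the cancellation $A\times Z\cong B\times Z\Rightarrow A\cong B$, applies it to $C(X)\times C(Z)\cong C(Y)\times C(Z)$, and then passes from $C(X)\cong C(Y)$ back to $X\cong Y$. The paper's route has the advantage of citing the theorem in its literal form. Your route has the advantage that its last step is automatic. In the paper, that step is stated without comment and needs the cone isometry to respect the vertices. In your version the matched objects are isometry classes of the link factors $X_i,Y_j,Z_l$ themselves, so reassembling $X\cong Y$ is immediate.
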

\begin{proof}
    We will prove the contrapositive. First, we will prove that if $X\times Z$ and $Y\times Z$ are isometric, then $X$ and $Y$ are isometric. We then prove the join statement. 

    Thanks to \cite[Theorem 1.1]{Lytchak-Foertsch-08}, we know that $X,Y,Z$ have a unique decomposition into a direct product, 
    \begin{align*}
        X &= \mathbb{R}^{x_0} \times X_1 \times ...\times X_r, \\
        Y &= \mathbb{R}^{y_0}\times Y_1 \times ...\times Y_s, \\
        Z &= \mathbb{R}^{z_0} \times Z_1 \times...
        \times Z_t.
    \end{align*}
    Thus, if $X\times Z \cong Y\times Z$, then 
    $$
\mathbb{R}^{x_0+z_0} \times X_1 \times ...\times X_r \times Z_1 \times ... \times Z_t \cong \mathbb{R}^{y_0+z_0} \times Y_1 \times ... \times Y_s \times Z_1 \times ... \times Z_t.
    $$
    We can see that since any $X_i,Y_j,Z_l$ is not isometric to the real line, then $x_0=y_0$. Furthermore, by uniqueness, where the irreducible factors are counted with multiplicity, we have
    $\{ X_1,...,X_r\} =\{Y_1,...,Y_s\}$, thus $X\cong Y.$

    Now assume that $X*Z\cong Y*Z$, then $C(X*Z)\cong C(Y*Z)$. We know that $C(X*Z) \cong C(X)\times C(Z)$, thus by the argument above, we see that $C(X) \cong C(Y)$, and therefore $X \cong Y$, proving the lemma.
\end{proof}
\begin{exmp}\label{Example: isospectral Alexandrov spaces}
    Take $Z:= (\mathbb{C}P^2,g_{FS})$, where $g_{FS}$ is the Fubini--Study metric, which has diameter $\frac{\pi}{2}$. Thus, $Z$ is a CBB$(1)$ space, but note that it is not a spherical orbifold, as it does not have constant curvature $1$.

    Now define $X:=\mathbb{S}^{11}*Z$, where $*$ denotes the spherical join. It is known that since both spaces are CBB$(1)$, then so is $X$ \cite{BGP_SCBB_1_92, Grove_Markvorsen_95}, and $\diam X \leq \pi$. Let $G=\mathrm{U}(6)$, $H_1=\mathrm{U}(3)$ and $H_2=\mathrm{Sp}(1)\times \mathrm{SO}(4)$. Then we let $H_i$ act on $X$ via the $\mathbb{S}^{11}$ factor and trivially on $Z$. We see that $X/H_i \cong \mathbb{S}^{11}/H_i \  * Z$, where $\mathbb{S}^{11}/H_i$ are the spaces studied in \cite[Theorem 1.2]{Adelstein_Sandoval_17}. Moreover the subgroups are $\{e\}$-equivalent \cite[Theorem 1.5]{An_Yu_Yu_13}. Now, applying Corollary \ref{Corollary :Cone statement} to $X$, we obtain that $Y_1:=\Con_1^{16}(X/H_1)$ and $Y_2:=\Con_1^{16}(X/H_2)$ are isospectral. 

    Furthermore, at each suspension point of $Y_i$, the space of directions is $\mathbb{S}^{11}/H_i \ *Z$, and $Z$ is not a spherical orbifold. Hence $Y_1$ and $Y_2$ are non-orbifolds. These spaces are Alexandrov: since  the join $X$ is CBB$(1)$, then both $X/H_i$ are CBB$(1)$, and therefore their cones $Y_i$ are CBB$(1)$ \cite{BGP_SCBB_1_92}. As the spaces are finite-dimensional Alexandrov spaces, Lemma \ref{Lemma: Join lemma} gives $\mathbb{S}^{11}/H_1*Z\not\cong \mathbb{S}^{11}/H_2*Z$. The spaces $Y_1$ and $Y_2$ are not isometric: if they were, as $C(Y_i)\cong C(X/H_i)\times\mathbb{R}$, using \cite{Lytchak-Foertsch-08}, we see that $C(X/H_1)\cong C(X/H_2)$. This contradicts the fact that $\mathbb{S}^{11}/H_1*Z\not\cong \mathbb{S}^{11}/H_2*Z$, see \cite[Theorem 1.2]{Adelstein_Sandoval_17} and Lemma \ref{Lemma: Join lemma}.
\end{exmp}

\begin{remark}
The spaces produced by Corollary~\ref{Corollary :Cone statement} are always simply connected (recall that we are assuming that our spaces are connected). Indeed, the spaces $\mathrm{Con}^{N}_{1}(X/H_{1})$ and $\mathrm{Con}^{N}_{1}(X/H_{2})$ of Corollary~\ref{Corollary :Cone statement} are simply connected whenever $X$ is connected. This applies to Examples~\ref{Example: isospectral RCD spaces} and \ref{Example: isospectral Alexandrov spaces}, and can be compared with \cite{Sutton_03}, where isospectral, non-isometric, simply connected \emph{homogeneous manifolds} were constructed.
\end{remark}

\printbibliography
\end{document}